\pgfplotsset{compat=1.15}
\tikzstyle{vertex}=[circle, draw, inner sep=0pt, minimum size=3pt]
\newcommand{\vertex}{\node[vertex]}
\begin{document}
\frontmatter          
\pagestyle{headings}  

\mainmatter              

\title{Characterization of Split Comparability Graphs}

\titlerunning{Title}  
%

\author{Tithi Dwary \and Khyodeno Mozhui  \and 
K. V. Krishna} 

\authorrunning{Tithi Dwary \and Khyodeno Mozhui \and K. V. Krishna } 

\institute{Indian Institute of Technology Guwahati, India\\
	\email{tithi.dwary@iitg.ac.in};\;\;\; 
	\email{k.mozhui@iitg.ac.in};\;\;\;
	\email{kvk@iitg.ac.in}}

\maketitle              

\begin{abstract}
A split graph is a graph whose vertex set can be partitioned into a clique and an independent set. A split comparability graph is a split graph which is transitively orientable. In this work, we characterize split comparability graphs in terms of vertex labelling. Further, using this characterization, we prove that the permutation-representation number of a split comparability graph is at most three. This gives us an alternative proof of the result in order theory that the dimension of a split order is at most three.
\end{abstract}

\keywords{Split graphs, word-representable graphs, comparability graphs, \textit{prn}, split order.}

\section{Introduction and Preliminaries}

A word is a finite sequence of letters taking from a finite set. A subword $u$ of a word $w$ is a subsequence of $w$, denoted by $u \ll w$. For instance, $aabccb \ll acabbccb$. Let $w$ be a word over a set $X$, and $A \subseteq X$. Then, the subword of $w$ restricted to the elements of $A$ is denoted by $w|_A$. For example, if $w = acabbccb$, then $w|_{\{a, b\}} = aabbb$. For a word $w$, if $w|_{\{a, b\}}$ is of the form $abab \cdots$ or $baba \cdots$, which can be of even or odd length, we say the letters $a$ and $b$ alternate in $w$; otherwise we say $a$ and $b$ do not alternate in $w$. A $k$-uniform word is a word in which every letter occurs exactly $k$ times. For a word $w$, we write $w^R$ to denote its reversal.

A simple graph $G = (V, E)$ is called a word-representable graph, if there exists a word $w$ over $V$ such that for all $a, b \in V$, $\overline{ab} \in E$ if and only if $a$ and $b$ alternate in $w$. A word-representable graph $G$ is said to be $k$-word-representable if there is a $k$-uniform word representing it. In \cite{MR2467435}, It was proved that every word-representable graph is $k$-word-representable, for some $k$. The representation number of a word-representable graph $G$, denoted by $\mathcal{R}(G)$, is defined as the smallest number $k$ such that $G$ is $k$-word-representable. An orientation of a graph is semi-transitive if it is acyclic, and for any directed path $a_1 \rightarrow a_2 \rightarrow \cdots \rightarrow a_k$ either there is no edge between $a_1$ and $a_k$, or $a_i \rightarrow a_j$ is an edge for all $1 \le i < j \le k$. It was proved that a graph is word-representable if and only if it admits a semi-transitive orientation \cite{Halldorsson_2016}. For a detailed introduction to the theory of word-representable graphs, one can refer to the monograph \cite{words&graphs}.

 A word-representable graph $G$ is said to be permutationally representable if there is a word $w$ of the form $p_1p_2 \cdots p_k$ representing $G$, where each $p_i$ is a permutation on the vertices of $G$; in this case $G$ is called a permutationally $k$-representable graph. Moreover, we say $w$ permutationally represents $G$.  The permutation-representation number (in short, \textit{prn}) of $G$, denoted by $\mathcal{R}^p(G)$, is the smallest number $k$ such that $G$ is permutationally $k$-representable. It was shown in \cite{perkinsemigroup} that a graph is permutationally representable if and only if it is a comparability graph - a graph which admits a transitive orientation. Recall that an orientation of a graph $G = (V, E)$ is transitive if $\overrightarrow{ab} \in E$, $\overrightarrow{bc} \in E$, then $\overrightarrow{ac} \in E$, for all $a, b, c \in V$. Note that a transitive orientation is also a semi-transitive orientation, but not conversely. It was reconciled in \cite{khyodeno2} that if $G$ is a comparability graph, then $\mathcal{R}^p(G)$ is precisely the dimension of any induced partially ordered set (poset) of $G$. 
 
 The class of graphs with representation number at most two is characterized as the class of circle graphs \cite{Hallsorsson_2011} and the class of graphs with \textit{prn} at most two is the class of permutation graphs \cite{Gallaipaper}. In general, the problems of determining the representation number of a word-representable graph, and the \textit{prn} of a comparability graph are computationally hard \cite{Hallsorsson_2011,yannakakis1982complexity}.

A graph $G = (V, E)$ is a split graph if the vertex set $V$ can be partitioned as $I \cup C$, where $I$ induces an independent set, and $C$ induces a clique in $G$. In this paper, we consider the clique $C$ for a split graph $G = (I \cup C, E)$ to be inclusion wise maximal, i.e., no vertices of $I$ is adjacent to all vertices of $C$. The class of  word-representable split graphs was characterized in \cite{Kitaev_2021} using semi-transitive orientation. Further, a forbidden induced subgraph characterization for the class of split comparability graphs (split graphs which are transitively orientable) can be found in \cite{Golumbicbook_2004}. Recall from \cite{split_orders2004} that a partial order is called a split order if the corresponding comparability graph is a split graph. It was proved in \cite{split_orders2004} that the dimension of a split order is at most three, and the bound is tight. In the following, we recall some relevant results known for the class of split graphs restricted to permutation graphs, comparability graphs and word-representable graphs.

\begin{theorem}[\cite{Split_circle_graphs}]\label{Split_permu_graph}
	Let $G$ be a split graph. Then, $G$ is a permutation graph if and only if $G$ is a $\mathcal{B}$-free graph, where $\mathcal{B}$ is the class of graphs given in Fig. \ref{fig7}. 
\end{theorem}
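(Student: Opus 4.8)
The plan is to prove the two directions of the equivalence separately, using throughout that the class of permutation graphs is hereditary: since a graph is a permutation graph exactly when $\mathcal{R}^p(G) \le 2$, restricting the two representing permutations to any subset of vertices represents the corresponding induced subgraph, so every induced subgraph of a permutation graph is again a permutation graph. This makes the necessity direction a finite verification.

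\textbf{Necessity ($G$ a permutation graph $\Rightarrow$ $G$ is $\mathcal{B}$-free).} Arguing by contraposition, it suffices to check that no graph $B$ in the family $\mathcal{B}$ of Fig.~\ref{fig7} is itself a permutation graph; heredity then forbids $B$ as an induced subgraph of any permutation graph. For each $B \in \mathcal{B}$ this can be certified directly, for instance by exhibiting that $\mathcal{R}^p(B) \ge 3$, equivalently that $B$ or its complement $\overline{B}$ fails to be a comparability graph (recall that $G$ is a permutation graph if and only if both $G$ and $\overline{G}$ are comparability graphs, see \cite{Golumbicbook_2004}). As the members of $\mathcal{B}$ are given explicitly, this is a direct inspection of each small configuration and of each parametrized family in $\mathcal{B}$.

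\textbf{Sufficiency ($G$ a $\mathcal{B}$-free split graph $\Rightarrow$ $G$ a permutation graph).} Here I would exploit the self-duality of split graphs under complementation: if $G = (I \cup C, E)$ is split, then $\overline{G}$ is again split, with $C$ now an independent set and $I$ now a clique. Combined with the characterization that a permutation graph is precisely a graph that is simultaneously a comparability graph and a co-comparability graph, this reduces the claim to showing that a $\mathcal{B}$-free split graph $G$ has the property that both $G$ and $\overline{G}$ are (split) comparability graphs. For this I would invoke the forbidden induced subgraph characterization of split comparability graphs from \cite{Golumbicbook_2004}: let $\mathcal{F}$ be that obstruction family, so a split graph is a comparability graph if and only if it is $\mathcal{F}$-free. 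Since $\overline{G}$ is split, $\overline{G}$ is a comparability graph if and only if $\overline{G}$ is $\mathcal{F}$-free, i.e.\ if and only if $G$ is $\overline{\mathcal{F}}$-free. Hence $G$ is a permutation graph if and only if $G$ is $(\mathcal{F} \cup \overline{\mathcal{F}})$-free, and the task becomes identifying this combined obstruction set, after discarding members that are not split graphs and those already excluded by splitness (the graphs $2K_2$, $C_4$, $C_5$), with the family $\mathcal{B}$ depicted in Fig.~\ref{fig7}.

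I expect the sufficiency direction to be the main obstacle, and within it the decisive step is matching $\mathcal{F} \cup \overline{\mathcal{F}}$ with $\mathcal{B}$: this requires computing the complement of each member of $\mathcal{F}$, verifying which complements remain split, and pruning redundancies so that no proper member of $\mathcal{B}$ is an induced subgraph of another. A complementary, more hands-on route, should the obstruction families not line up cleanly, is to build a permutation diagram for $G$ directly: fix the linear order induced on the clique $C$, and use $\mathcal{B}$-freeness to show that the neighborhoods $N_C(v) = N(v) \cap C$ for $v \in I$ (each a proper subset of $C$, by maximality of $C$) form a nested, consecutively arrangeable system along this order, so that the vertices of $I$ and $C$ can be placed on two parallel lines with edges realized by crossings. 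In either route the crux is the same: one must show that every way two independent vertices can overlap incompatibly on $C$ \textemdash{} the only obstruction to a two-dimensional layout \textemdash{} produces an induced copy of some graph in $\mathcal{B}$, which is exactly the content of $\mathcal{B}$-freeness.
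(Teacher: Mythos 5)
Your proposal cannot be compared against an in-paper argument, because the paper gives none: Theorem \ref{Split_permu_graph} is imported as a known result from \cite{Split_circle_graphs}. Judged on its own merits, your outline is correct, and the step you flag as ``the main obstacle'' in fact goes through cleanly. Since the complement of a split graph is split, and a graph is a permutation graph if and only if both it and its complement are comparability graphs (the classical Even--Pnueli--Lempel theorem, as you note via \cite{Golumbicbook_2004}), applying Theorem \ref{Split_comp_graph} to $G$ and to $\overline{G}$ shows that a split graph $G$ is a permutation graph if and only if $G$ contains none of $B_1, B_2, B_3, \overline{B_1}, \overline{B_2}, \overline{B_3}$ as an induced subgraph. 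Here $\mathcal{F}=\{B_1,B_2,B_3\}$ is explicitly the family in Theorem \ref{Split_comp_graph}, and the matching you postponed is a finite check that works out exactly: $\overline{B_1}\cong B_2$ (the net and the $3$-sun are complements of one another), hence $\overline{B_2}\cong B_1$, and $\overline{B_3}\cong B_4$ --- indeed $\overline{B_3}$ consists of a $K_4$ together with three independent vertices whose neighborhoods are three $2$-subsets of the clique, two of them disjoint and the third meeting both, which is precisely the structure of $B_4$. Thus $\mathcal{F}\cup\overline{\mathcal{F}}=\{B_1,B_2,B_3,B_4\}=\mathcal{B}$, and no pruning is needed: all complements involved are split graphs, so your concern about discarding $2K_2$, $C_4$, $C_5$ does not arise. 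Two further remarks: the complementation argument proves both directions simultaneously, so your separate hereditary finite-verification for necessity, while correct (for $B_4$ one uses that $\overline{B_4}\cong B_3$ is not a comparability graph), is redundant; and your fallback ``hands-on'' route via nested, consecutively arrangeable neighborhood systems would be considerably more laborious and is not needed.
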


\begin{figure}[t]
	\centering
	\begin{minipage}{.25\textwidth}
		\centering
		\[\begin{tikzpicture}[scale=0.6]

			\vertex (1) at (0,0) [fill=black,label=left:$ $] {};
			\vertex (2) at (1.5,0) [fill=black,label=left:$ $] {};
			\vertex (3) at (0,1) [fill=black,label=left:$ $] {};	
			\vertex (4) at (1.5,1) [fill=black,label=left:$ $] {};
			\vertex (5) at (0.75,2) [fill=black,label=left:$ $] {};	
			\vertex (6) at (0.75,3) [fill=black,label=left:$ $] {};

			\path 
			(1) edge (3)
			(2) edge (4)
			(3) edge (5)
			(4) edge (5)
			(5) edge (6)
			(3) edge (4);

		\end{tikzpicture}\] 
		$B_1$	
	\end{minipage}%
	\begin{minipage}{.25\textwidth}
		\centering
		
		\[\begin{tikzpicture}[scale=0.6]

			\vertex (1) at (0,0) [fill=black,label=left:$ $] {};
			\vertex (2) at (1.5,0) [fill=black,label=left:$ $] {};
			\vertex (3) at (3,0) [fill=black,label=left:$ $] {};	
			\vertex (4) at (0.75,1) [fill=black,label=left:$ $] {};
			\vertex (5) at (2.25,1) [fill=black,label=left:$ $] {};	
			\vertex (6) at (1.5,2) [fill=black,label=left:$ $] {};

			\path
			(1) edge (2)
			(2) edge (3)
			(1) edge (4)
			(2) edge (4)
			(2) edge (5)
			(3) edge (5)
			(4) edge (5)
			(4) edge (6)
			(5) edge (6); 
			
		\end{tikzpicture}\]
		$B_2$
	\end{minipage}%
	\begin{minipage}{.25\textwidth}
		\centering
		
		\[\begin{tikzpicture}[scale=0.6]
			
			\vertex (1) at (0,0) [fill=black,label=left:$ $] {};
			\vertex (2) at (1.5,0) [fill=black,label=left:$ $] {};
			\vertex (3) at (0,1) [fill=black,label=left:$ $] {};	
			\vertex (4) at (1.5,1) [fill=black,label=left:$ $] {};
			\vertex (5) at (0.75,2) [fill=black,label=left:$ $] {};	
			\vertex (6) at (-0.75,2) [fill=black,label=left:$ $] {};	
			\vertex (7) at (2.25,2) [fill=black,label=left:$ $] {};

			\path 
			(1) edge (3)
			(2) edge (4)
			(3) edge (5)
			(4) edge (5)
			(5) edge (6)
			(3) edge (4)
			(3) edge (6)
			(5) edge (7)
			(4) edge (7);

		\end{tikzpicture}\]
		$B_3$
	\end{minipage}%
	\begin{minipage}{.25\textwidth}
		\centering
		
		\[\begin{tikzpicture}[scale=0.4]
			
			\vertex (1) at (0,0) [fill=black,label=below:$ $] {};
			\vertex (2) at (2,0) [fill=black,label=below:$ $] {};
			\vertex (3) at (2,2) [fill=black,label=right:$ $] {};
			\vertex (4) at (0,2) [fill=black,label=left:$ $] {};
			\vertex (5) at (1,3.2) [fill=black,label=above:$ $] {};
			\vertex (6) at (3.2,1) [fill=black,label=right:$ $] {};
			\vertex (7) at (-1.2,1) [fill=black,label=left:$ $] {};
			
			\path 
			(1) edge (2)
			(1) edge (7)
			(1) edge (4)
			(1) edge (3)
			(2) edge (3)
			(2) edge (4)
			(2) edge (6)
			(3) edge (4)
			(3) edge (5)
			(3) edge (6)
			(4) edge (5)
			(4) edge (7);
			
		\end{tikzpicture}\]
		$B_4$
	\end{minipage}%
	\caption{The family of graphs $\mathcal{B}$}
	\label{fig7}	
\end{figure}
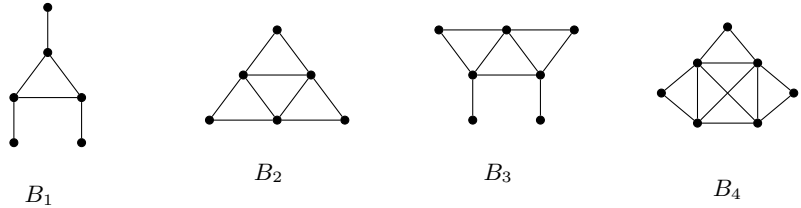

\begin{theorem}[\cite{Golumbicbook_2004}] \label{Split_comp_graph}
	Let $G$ be a split graph. Then, $G$ is a comparability graph if and only if $G$ contains no induced subgraph isomorphic to $B_1$, $B_2$, or $B_3$ (depicted in Fig. \ref{fig7}).
\end{theorem}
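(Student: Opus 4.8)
The plan is to prove both implications, treating the easy necessity first and reserving the bulk of the effort for sufficiency.

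\textbf{Necessity.} Since a transitive orientation of a graph restricts to a transitive orientation of every induced subgraph, the class of comparability graphs is hereditary; hence it suffices to verify that none of $B_1$, $B_2$, $B_3$ is a comparability graph. For each, I would argue directly from the fact that any transitive orientation of its central triangle is a linear order. In $B_1$ (the net), whichever triangle-vertex is the median of this order carries a pendant edge that can be oriented neither way without forcing a nonedge, so no transitive orientation exists. For $B_2$ (the $3$-sun) and $B_3$ the same style of forcing argument applies: orienting the triangle as $c_1 \to c_2 \to c_3$ and propagating transitivity to the attached vertices produces, in every case, a forced edge $\overrightarrow{ab}$ with $\overline{ab} \notin E$. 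Equivalently, one may invoke Golumbic's $\Gamma$-forcing relation and exhibit, in each $B_i$, an edge forced into both orientations. This settles necessity.

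\textbf{Sufficiency.} Here I assume $G = (I \cup C, E)$ is split with $C$ a maximal clique and no induced $B_1$, $B_2$, $B_3$, and I construct a transitive orientation. The starting observation is that in any transitive orientation the clique $C$ is linearly ordered, say $c_1 \to c_2 \to \cdots \to c_m$, and each $v \in I$ (whose entire neighborhood lies in $C$) must have its down-neighbors $\{c \in C : \overrightarrow{cv} \in E\}$ form a prefix $\{c_1, \dots, c_j\}$ and its up-neighbors $\{c \in C : \overrightarrow{vc} \in E\}$ form a suffix $\{c_k, \dots, c_m\}$ of this order, with the intervening vertices being exactly the non-neighbors of $v$ (the gap is nonempty by maximality of $C$). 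Moreover, no clique vertex $c$ may have both a down-$I$-neighbor $v$ and an up-$I$-neighbor $w$, since then $\overrightarrow{vc}$ and $\overrightarrow{cw}$ would force the nonedge $\overrightarrow{vw}$. Thus the task reduces to finding a linear order on $C$ together with a labelling of each clique vertex as type $A$ (all its $I$-neighbors precede it) or type $B$ (all succeed it) so that every $v \in I$ has neighborhood equal to an $A$-prefix together with a $B$-suffix.

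The crux is to show that the absence of induced $B_1$, $B_2$, $B_3$ guarantees such a consistent order and labelling. I would build the order from the containment and overlap pattern of the sets $N(v)$ for $v \in I$, and show that every local conflict preventing consistency embeds one of the forbidden graphs: three clique vertices each carrying a private pendant produce $B_1$; three independent vertices whose neighborhoods pairwise cover the three edges of a triangle produce $B_2$ (the $3$-sun); and the mixed configuration of two pendants together with two edge-covering vertices produces $B_3$. Once consistency is established, I define the orientation by the resulting chain on $C$ together with $\overrightarrow{cv}$ for $A$-type neighbors and $\overrightarrow{vc}$ for $B$-type neighbors, and verify transitivity by checking that every directed path of length two closes up---automatic within $C$, and guaranteed across the $I$-$C$ edges precisely by the prefix/suffix and $A/B$ conditions.

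The main obstacle is this last consistency step of the sufficiency direction: converting ``no induced $B_1, B_2, B_3$'' into a single global linear order is not a purely local matter, because the neighborhoods of different $I$-vertices interact, and one must rule out longer conflict cycles rather than just single bad triples. I expect to handle this by extracting, from any minimal obstruction to orderability (for instance an odd cycle in the associated forcing/conflict structure), an induced copy of one of the three graphs, which is the technical heart of the argument.
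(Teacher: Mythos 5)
The paper does not prove this statement at all: it is imported from Golumbic's book as a known preliminary (Theorem~\ref{Split_comp_graph} carries a citation, not a proof), so your attempt has to be judged on its own merits rather than against a proof in the paper. Your necessity direction is essentially fine: comparability graphs form a hereditary class, and the forcing arguments you sketch do go through for each of $B_1$, $B_2$, $B_3$ (in the net, a pendant at the median of the forced linear order on the triangle cannot be oriented either way; in the $3$-sun, the forced source and sink attached to the median vertex create a directed path through it that forces a nonedge of $I$; a similar check kills $B_3$). These are finite, routine verifications.

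The genuine gap is in the sufficiency direction, and you have in effect flagged it yourself. Your reduction is correct: a transitive orientation of a split graph $G=(I\cup C,E)$ with $C$ maximal amounts to a linear order on $C$ in which every $v\in I$ sees a down-prefix and an up-suffix with a nonempty gap, and no clique vertex has both an in-neighbor and an out-neighbor in $I$; this matches the labelling characterization the paper itself establishes (Theorem~\ref{coro_3}). But the entire content of the theorem is the step you defer: proving that if \emph{no} such order and labelling exist, then $G$ contains an induced $B_1$, $B_2$, or $B_3$. You assert that every ``local conflict'' embeds one of the three graphs and that ``longer conflict cycles'' can be handled by extracting an induced copy from a ``minimal obstruction,'' but you never define the conflict/forcing structure, never prove that obstructions to orderability are captured by odd cycles in it (this is essentially the Gallai/Ghouila-Houri theory of implication classes and cannot be invoked as self-evident), and never carry out the extraction — which would require a nontrivial case analysis or induction showing that an arbitrary minimal obstruction collapses to one of exactly three small configurations. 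The three configurations you do list (three private pendants give $B_1$; three edge-covering independent vertices give $B_2$; the mixed case gives $B_3$) are the easy implication, from configuration to forbidden subgraph; the converse is where all the combinatorial difficulty of the theorem lives. As it stands, the proposal is a correct setup plus an announced plan, not a proof.
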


\begin{theorem}[\cite{Kitaev_2021}]\label{th_7}
	Let $G = (I \cup C, E)$ be a split graph with $|C| = k$, and $S$ be a semi-transitive orientation of $G$ with the longest directed path $P: c_1 \rightarrow \cdots \rightarrow c_k$ in $C$. Then, the vertices of $I$ can be subdivided into the following three, possibly empty, disjoint types. For $a \in I$, 
	
	\begin{enumerate}[label=\rm (\alph*)]
		\item \label{point_III} $N(a) = \{c_1, \ldots, c_m\} \cup \{c_n, \ldots, c_k\}$, for some $1 \le m < n \le k$, such that $\overrightarrow{c_ia} \in S$ for all $1 \le i \le m$ and $\overrightarrow{ac_i} \in S$ for all $n \le i \le k$;
		
		\item \label{point_II} $N(a) = \{c_l, \ldots, c_r\}$, for some $1 \le l \le r \le k$, such that $\overrightarrow{c_ia} \in S$ for all $l \le i \le r$ $($$a$ is a sink$)$; or
		
		\item \label{point_I} $N(a) = \{c_l, \ldots, c_r\}$, for some $1 \le l \le r \le k$, such that $\overrightarrow{ac_i} \in S$ for all $l \le i \le r$ $($$a$ is a source$)$.	
	\end{enumerate}
\end{theorem}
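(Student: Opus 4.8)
The plan is to read the whole classification off two ingredients: the forced orientation inside the clique, and one consequence of semi-transitivity that I would apply three times. First I would pin down $S$ on $C$. Since $C$ is a clique and $S$ is acyclic, the restriction $S|_C$ is a transitive tournament, and its unique longest directed path is the Hamiltonian path $P$; thus the labelling satisfies $\overrightarrow{c_ic_j}\in S$ exactly when $i<j$. Now fix $a\in I$. As $I$ is independent, $N(a)\subseteq C$, so I would split the neighbours of $a$ into the in-set $A=\{\,i:\overrightarrow{c_ia}\in S\,\}$ and the out-set $B=\{\,i:\overrightarrow{ac_i}\in S\,\}$, which partition the indices of $N(a)$. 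Next I would extract the acyclicity constraint: for $i\in A$ and $j\in B$ one must have $i<j$, since $j<i$ produces the directed triangle $c_i\to a\to c_j\to c_i$ (using $\overrightarrow{c_jc_i}\in S$) and $i=j$ is impossible. Hence $\max A<\min B$ whenever both sets are nonempty.

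The engine is what I would call the shortcut trick: whenever a directed path has its two endpoints joined by an edge, semi-transitivity forces the vertices of the path to span a transitive tournament, so every ``missing'' edge along the path is in fact present and forward-directed. I would apply it in three ways. To obtain contiguity of $A$, note that for $i<j<i'$ with $i,i'\in A$ the path $c_i\to c_j\to c_{i'}\to a$ has adjacent endpoints $c_i,a$ (edge $\overrightarrow{c_ia}$), whence $\overrightarrow{c_ja}\in S$ and $j\in A$; the mirror-image path $a\to c_i\to c_j\to c_{i'}$ gives contiguity of $B$. When both $A,B\neq\emptyset$ I would push further: for $j<\min A$ the path $c_j\to c_{\min A}\to a\to c_{\min B}$ has adjacent endpoints $c_j,c_{\min B}$ (as $j<\min B$), forcing $\overrightarrow{c_ja}\in S$, i.e.\ $j\in A$; so $\min A=1$, and with contiguity $A=\{1,\dots,m\}$ where $m=\max A$. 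Symmetrically, $c_{\max A}\to a\to c_{\max B}\to c_j$ for $j>\max B$ forces $\overrightarrow{ac_j}\in S$, so $\max B=k$ and $B=\{n,\dots,k\}$ with $n=\min B$.

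Finally I would assemble the three cases. If $B=\emptyset$, then $A$ is a contiguous block of in-edges and $a$ is a sink, i.e.\ type~\ref{point_II}; if $A=\emptyset$, symmetrically $a$ is a source, type~\ref{point_I}; and if both are nonempty, then $A=\{1,\dots,m\}$ and $B=\{n,\dots,k\}$ with $m<n$ by acyclicity, which is type~\ref{point_III}. Here the maximality of $C$ forces $n>m+1$, so the middle block $c_{m+1},\dots,c_{n-1}$ consists of genuine non-neighbours.

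I expect the only real obstacle to be the second application of the shortcut trick: one has to choose exactly the right path---with $a$ placed in the interior and a clique edge $\overrightarrow{c_jc_{\min B}}$ (resp.\ $\overrightarrow{c_{\max A}c_j}$) serving as the joining shortcut---so that semi-transitivity propagates the in-edges all the way down to $c_1$ and the out-edges all the way up to $c_k$. Once the correct paths are identified, every step is an immediate instance of the transitive-tournament condition applied to a length-three directed path.
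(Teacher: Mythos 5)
The paper does not prove this statement at all: it is imported verbatim, with citation, from \cite{Kitaev_2021}, so there is no internal proof to compare yours against; I can only judge your argument on its own merits, and it is correct. The three pillars all check out: (1) the restriction of the acyclic orientation $S$ to the clique $C$ is a transitive tournament whose unique Hamiltonian (hence longest) path is $P$, giving $\overrightarrow{c_ic_j} \in S$ iff $i<j$; (2) acyclicity rules out $i \in A$, $j \in B$ with $j \le i$ (directed triangle $a \to c_j \to c_i \to a$), so $\max A < \min B$ when both are nonempty; (3) each of your four invocations of the ``shortcut'' consequence of semi-transitivity uses a genuine directed path with adjacent endpoints --- $c_i \to c_j \to c_{i'} \to a$ and $a \to c_i \to c_j \to c_{i'}$ for contiguity of $A$ and $B$, and $c_j \to c_{\min A} \to a \to c_{\min B}$ (resp.\ its mirror $c_{\max A} \to a \to c_{\max B} \to c_j$) to force $\min A = 1$ (resp.\ $\max B = k$) --- and in each case the forced edge ($\overrightarrow{c_ja}$ or $\overrightarrow{ac_j}$) is exactly a shortcut of that path, so the contradiction or membership you draw is legitimate. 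Two minor remarks. First, a vertex $a \in I$ with $N(a) = \varnothing$ fits none of the three types as literally written (types \ref{point_II} and \ref{point_I} require a nonempty interval); this is a defect of the statement itself rather than of your proof, and is harmlessly resolved by letting the sink/source types admit empty neighbourhoods. Second, your closing claim that maximality of $C$ forces $n > m+1$ is true under the paper's standing convention that no vertex of $I$ is adjacent to all of $C$, but it is not part of the statement being proved and your proof nowhere depends on it.
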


For any two integers $a \le b$, $[a, b]$ in Theorem \ref{Word_split_graph} denotes the set of integers $\{a, a+1, \ldots, b\}$.

\begin{theorem}[\cite{Kitaev_2021,Kitaev_2024}]\label{Word_split_graph}
	Let $G = (I \cup C, E)$ be a split graph. Then, $G$ is semi-transitively orientable if and only if the vertices of $C$ can be labeled from $1$ to $k = |C|$ in such a way that for each $a, b \in I$ the following holds.
	\begin{enumerate}[label=\rm (\roman*)]
		\item \label{point_1} Either $N(a) = [1, m] \cup [n, k]$, for $m < n$, or $N(a) = [l, r]$, for $l \le r$.
		\item \label{point_2} If $N(a) = [1, m] \cup [n, k]$ and $N(b) = [l, r]$, for $m < n$ and $l \le r$, then $l > m$ or $r < n$.
		\item \label{point_3} If $N(a) = [1, m] \cup [n, k]$ and $N(b) = [1, m'] \cup [n', k]$, for $m < n$ and $m' < n'$, then $m' < n$ and $m < n'$.
	\end{enumerate}
\end{theorem}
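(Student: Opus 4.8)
The plan is to prove both implications, leaning on the structural description of semi-transitive orientations of split graphs given in Theorem \ref{th_7}, and recalling that a graph is word-representable exactly when it is semi-transitively orientable.

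For the forward (only-if) direction, suppose $G$ admits a semi-transitive orientation $S$. Since $C$ is a clique and $S$ is acyclic, the restriction of $S$ to $C$ is a transitive tournament, hence a linear order; I label its vertices along the unique directed Hamiltonian path $c_1 \rightarrow \cdots \rightarrow c_k$ (which is the longest directed path in $C$) and identify $c_i$ with the integer $i$. Applying Theorem \ref{th_7} to this path, every $a \in I$ is of type (a), (b), or (c): type (a) gives $N(a) = [1,m] \cup [n,k]$, and types (b)/(c) give $N(a) = [l,r]$, so condition \ref{point_1} is immediate. For \ref{point_2} and \ref{point_3} I argue by contradiction, each time exhibiting a directed path whose endpoints are adjacent but which carries two non-adjacent vertices of $I$ at non-consecutive positions, contradicting semi-transitivity. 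Concretely, if \ref{point_3} fails for type-(a) vertices $a,b$, say $m' \ge n$, then $c_1 \rightarrow a \rightarrow c_n \rightarrow b \rightarrow c_k$ is a directed path whose endpoints are joined by the clique edge $c_1 \rightarrow c_k$, yet $a$ and $b$ occupy positions $2$ and $4$; as $a,b \in I$ are non-adjacent, semi-transitivity is violated. If \ref{point_2} fails, so $N(a) = [1,m]\cup[n,k]$, $N(b)=[l,r]$ with $l \le m$ and $r \ge n$, then according as $b$ is a sink or a source one forms $c_l \rightarrow a \rightarrow c_n \rightarrow b$ (resp.\ $b \rightarrow c_l \rightarrow a \rightarrow c_n$), whose adjacent endpoints again force the non-edge $\overline{ab}$, a contradiction.

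For the converse (if) direction, I construct a semi-transitive orientation $S$ from the labeling. Orient the clique as $c_1 \rightarrow \cdots \rightarrow c_k$; orient each type-(a) vertex $a$ (with $N(a)=[1,m]\cup[n,k]$) by $c_i \rightarrow a$ for $i \le m$ and $a \rightarrow c_i$ for $i \ge n$; and orient every interval vertex $b$ (with $N(b)=[l,r]$) as a sink, $c_i \rightarrow b$ for $i \in [l,r]$. Acyclicity follows from a potential function: assign value $i$ to $c_i$, a value in the open interval $(m,n)$ to each type-(a) vertex, and value $k+1$ to each sink. Every edge strictly increases the value, so $S$ is acyclic and, along any directed path, the clique indices strictly increase.

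The heart of the argument, and the step I expect to be the main obstacle, is verifying that $S$ has no shortcut. The only possible missing edges lie between two vertices of $I$, so a violation requires a directed path whose first and last vertices are adjacent and which contains two non-adjacent $I$-vertices. A sink can only be terminal, and two interval sinks cannot share a path; using strict monotonicity of clique indices, a path carrying two type-(a) vertices $a,b$ forces $m_b \ge n_a$, contradicting \ref{point_3}, while a path from a clique vertex through a type-(a) vertex $a$ to a sink $b$ forces $l_b \le m_a$ and $r_b \ge n_a$, contradicting \ref{point_2}. All remaining paths (with at most one type-(a) vertex and no sink, or a clique-to-sink path with no type-(a) vertex) have every clique vertex adjacent, in the correct orientation, to the unique $I$-vertex and to one another, so all required forward edges are present. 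Hence $S$ is semi-transitive and $G$ is word-representable, completing the proof.
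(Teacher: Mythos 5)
The first thing to say is that the paper never proves Theorem \ref{Word_split_graph}: it is stated in the preliminaries as an imported result from \cite{Kitaev_2021,Kitaev_2024} and is used as a black box (e.g.\ in the proof of Theorem \ref{coro_3}). So there is no in-paper proof to compare yours against, and your proposal must be judged on its own merits; it holds up, and it is essentially the style of argument of the cited sources. Your only-if direction is sound: acyclicity forces the tournament on $C$ to be transitive, Theorem \ref{th_7} yields condition \ref{point_1}, and your explicit paths ($c_1 \rightarrow a \rightarrow c_n \rightarrow b \rightarrow c_k$ when \ref{point_3} fails; $c_l \rightarrow a \rightarrow c_n \rightarrow b$, resp.\ $b \rightarrow c_l \rightarrow a \rightarrow c_n$, when \ref{point_2} fails) are genuine directed paths (note $n < k$ is automatic when $m' \ge n$) with adjacent endpoints carrying the non-edge $\overline{ab}$, violating semi-transitivity. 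The if direction is also sound, including the simplifying choice to make every interval vertex a sink (no sources are needed, in contrast with Theorem \ref{th_7}); the potential function gives acyclicity and the strict increase of clique indices along directed paths, and your case analysis of candidate shortcut paths is exhaustive: two type-(a) vertices on a common path contradict \ref{point_3} outright, a clique-to-type-(a)-to-sink path with adjacent endpoints contradicts \ref{point_2}, two sinks cannot share a path, and all remaining paths have every pair adjacent and forward-oriented.

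One sentence of yours is false, although the proof survives it: ``the only possible missing edges lie between two vertices of $I$.'' A vertex $a \in I$ is also non-adjacent to every clique vertex outside $N(a)$, so $I$--$C$ non-edges exist and could a priori serve as the missing chord of a shortcut. Had you used that sentence to dismiss paths containing at most one $I$-vertex, this would be a real gap; you did not, because your ``all remaining paths'' step explicitly verifies that on such paths every clique vertex is adjacent, with the forward orientation, to the unique $I$-vertex, which is exactly what rules out $I$--$C$ non-edges as chords. I would delete the offending sentence and phrase the conclusion as: every directed path whose endpoints are adjacent either cannot exist (by \ref{point_2} or \ref{point_3}) or has all of its vertex pairs adjacent and forward-oriented. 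A second, smaller imprecision: in the clique-to-type-(a)-to-sink case, the bound $l_b \le m_a$ does not follow from the existence of the path alone but uses the endpoint adjacency $c_s \sim b$ together with index monotonicity; since your framing restricts attention to endpoint-adjacent paths this is fine, but it is worth making explicit.
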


\section{Our Contributions}

This work is a continuation of the work in \cite{tithi_split}, in which the representation number of word-representable split graphs was obtained through an algorithmic procedure using the labelling given in Theorem \ref{Word_split_graph}. In this work, we extend the aforesaid algorithm to obtain the \textit{prn} for split comparability graphs. For which, first we characterize the class of split comparability graphs in terms of the vertex labelling. Using this characterization, we devise an algorithmic procedure to construct a $3$-uniform word permutationally representing a split comparability graph. This shows that the permutation-representation number of a split comparability graph is at most three. Accordingly, we obtain an alternative proof for the result \cite[Theorem 22]{split_orders2004} in order theory that the dimension of a split order is at most three. Additionally, we determine the class of split comparability graphs having permutation-representation number exactly three.

\section{Characterization}

Let $G = (I \cup C, E)$ be a split comparability graph with $|C| = k$. Suppose $D$ is a transitive orientation of $G$ with the longest directed path in $C$, say $P: c_1 \rightarrow \cdots \rightarrow c_k$. We extend Theorem \ref{th_7} for the transitive orientation $D$ and give a classification of elements of $I$ in the following lemma.

\begin{lemma}\label{neighbors}
The set $I$ is a disjoint union of $I_1$, $I_2$ and $I_3$ defined by the following:  
	\begin{enumerate}[label=\rm (\roman*)]
		\item $I_1$ consists of $a \in I$ such that $N(a) = \{c_1, \ldots, c_m\} \cup \{c_n, \ldots, c_k\}$, for some $1 \le m < n \le k$, with $\overrightarrow{c_ia} \in D$ for all $1 \le i \le m$ and $\overrightarrow{ac_i} \in D$ for all $n \le i \le k$;
			
		\item $I_2$ consists of $a \in I$ such that  $N(a) = \{c_1, \ldots, c_r\}$, for some $r < k$, with $\overrightarrow{c_ia} \in D$ for all $1 \le i \le r$ $(a$ is a sink$)$; or
		
		\item $I_3$ consists of $a \in I$ such that  $N(a) = \{c_l, \ldots, c_k\}$, for some $l > 1$, with $\overrightarrow{ac_i} \in D$ for all $l \le i \le k$ $(a$ is a source$)$.
	\end{enumerate}
\end{lemma}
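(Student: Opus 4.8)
The plan is to bootstrap directly from Theorem~\ref{th_7}. Since a transitive orientation is in particular semi-transitive, $D$ is a semi-transitive orientation of $G$, so Theorem~\ref{th_7} applies to $D$ with the longest path $P$ and already partitions $I$ into the three disjoint types \ref{point_III}, \ref{point_II}, \ref{point_I}. It therefore suffices to show that transitivity (a strictly stronger property than semi-transitivity) sharpens the neighborhood descriptions of the sink- and source-type vertices into those claimed for $I_2$ and $I_3$; the description of type~\ref{point_III} matches that of $I_1$ verbatim, so $I_1$ is exactly the set of type-\ref{point_III} vertices with no further work. As a preliminary observation I would record that, because $C$ is a clique and the restriction of $D$ to $C$ is an acyclic transitive tournament, the longest path visits every vertex of $C$ and forces $\overrightarrow{c_ic_j}\in D$ for all $1\le i<j\le k$.

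Next I would refine type~\ref{point_II}. Let $a$ be a sink with $N(a)=\{c_l,\ldots,c_r\}$. To see that $l=1$, suppose instead that $l>1$; then $\overrightarrow{c_{l-1}c_l}\in D$ and $\overrightarrow{c_l a}\in D$, so transitivity yields $\overrightarrow{c_{l-1}a}\in D$, placing $c_{l-1}$ in $N(a)$ and contradicting the minimality of $l$. Hence $l=1$ and $N(a)=\{c_1,\ldots,c_r\}$. To see that $r<k$, observe that $r=k$ would give $N(a)=C$, i.e.\ $a$ adjacent to every vertex of $C$, contradicting the maximality of the clique $C$. Thus $a\in I_2$.

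Symmetrically, for a source $a$ with $N(a)=\{c_l,\ldots,c_r\}$, the edges $\overrightarrow{a c_r}$ and $\overrightarrow{c_r c_{r+1}}$ would force $\overrightarrow{a c_{r+1}}\in D$ by transitivity whenever $r<k$, contradicting the maximality of $r$; hence $r=k$, and the maximality of $C$ again rules out $l=1$, so $a\in I_3$. Finally, the disjointness of $I_1,I_2,I_3$ is inherited from the disjointness of the three types in Theorem~\ref{th_7}, and their union is all of $I$, which gives the claimed partition. I do not expect a serious obstacle: the entire content is the two one-step transitive closures along $P$ together with the maximality of $C$. The only points needing care are the degenerate/boundary situations (for instance singleton neighborhoods, or vertices of $I$ with no neighbor in $C$), and verifying that the maximality hypothesis on $C$ is precisely what upgrades the weak bounds $1\le l\le r\le k$ of the semi-transitive classification to the strict endpoints $l=1,\ r<k$ for sinks and $r=k,\ l>1$ for sources.
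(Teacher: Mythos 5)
Your proposal is correct and follows essentially the same route as the paper: invoke Theorem~\ref{th_7} via the fact that transitive orientations are semi-transitive, identify type~\ref{point_III} with $I_1$, and then use one-step transitivity along $P$ plus the inclusion-wise maximality of $C$ to force $l=1,\ r<k$ for sinks and $r=k,\ l>1$ for sources. The only cosmetic difference is that you derive the contradiction from $c_{l-1}$ (respectively $c_{r+1}$) landing in $N(a)$, whereas the paper uses the endpoint $c_1$ and the missing edge $\overrightarrow{c_1a}$; the content is identical.
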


\begin{proof}
	Since every transitive orientation is also a semi-transitive orientation, if $a \in I$, then $a$ belongs to one of the three types defined in Theorem \ref{th_7}. Note that the elements of type (a) are precisely in $I_1$. Suppose $a$ is of type \ref{point_II} so that $N(a) = \{c_l, \ldots, c_r\}$, for some $1 \le l \le r \le k$, such that $\overrightarrow{c_ia} \in D$ for all $l \le i \le r$. If $l > 1$, then we have $\overrightarrow{c_1c_l} \in D$ and $\overrightarrow{c_la} \in D$,  but $\overrightarrow{c_1a} \notin D$; a contradiction to $D$ is a transitive orientation of $G$. Thus, we must have $l = 1$. Further, as $C$ is a maximal clique in $G$, we have $r < k$; otherwise, the clique $C$ can be enlarged by including $a$. Hence, if $a$ is of type (b), then $a \in I_2$. Similarly, if $a$ is of type \ref{point_I}, we can prove that $a \in I_3$.
  \qed
\end{proof}

\begin{lemma}\label{cor_2}
	For $a, b \in I$, if $a \in I_2$ and $b \in I_3$, then $N(a) \cap N(b) = \varnothing$. That is, if $N(a) = \{c_1, \ldots, c_r\}$, for some $r < k$, and $N(b) = \{c_l, \ldots, c_k\}$, for some $l > 1$, then  $r < l$.
\end{lemma}

\begin{proof}
	Note that $a$ and $b$ are sink and source, respectively, in $D$. On the contrary, suppose that $r \ge l$ so that we have $\overrightarrow{bc_r} \in D$ and $\overrightarrow{c_ra} \in D$. Since $D$ is a transitive orientation, we must have $\overrightarrow{ba} \in D$. This contradicts $\overline{ab} \notin E$, as $a, b \in I$. \qed
\end{proof}

\begin{theorem}\label{coro_3}
	Let $G = (I \cup C, E)$ be a split graph. Then, $G$ is transitively orientable if and only if the vertices of $C$ can be labeled from $1$ to $k = |C|$ such that the following properties $({\rm i})-({\rm v})$ hold: For $a, b \in I$,
	\begin{enumerate}[label=\rm (\roman*)]
		\item \label{pt_1} The neighborhood $N(a)$ has one of the following forms: $[1, m] \cup [n, k]$ for $m < n$, $[1, r]$ for $r < k$, or $[l, k]$ for $l > 1$.
		
		\item \label{pt_2} If $N(a) = [1, r]$ and $N(b) = [l, k]$, for $r < k$ and $l > 1$, then $r < l$.
		
		\item \label{pt_3} If $N(a) = [1, m] \cup [n, k]$ and $N(b) = [1, r]$,  for $r < k$ and $m < n$, then $r < n$.
		
		\item \label{pt_4} If $N(a) = [1, m] \cup [n, k]$ and $N(b) = [l, k]$, for $l > 1$ and $m < n$, then $m < l$.
		
		\item \label{pt_5} If $N(a) = [1, m] \cup [n, k]$ and  $N(b) = [1, m'] \cup [n', k]$, for $m < n$ and $m' < n'$, then $m < n'$ and $m' < n$.
	\end{enumerate}
\end{theorem}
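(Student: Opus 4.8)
The plan is to prove both directions, using the longest-path labelling supplied by the preliminary lemmas for the forward implication and an explicitly constructed orientation for the converse. For the forward direction, I would begin with a transitive orientation $D$ of $G$ and fix the labelling of $C$ induced by its longest directed path $P : c_1 \rightarrow \cdots \rightarrow c_k$. Property (i) is then immediate from Lemma~\ref{neighbors}, which partitions $I$ into the three types $I_1, I_2, I_3$ having exactly the listed neighbourhood shapes, and property (ii) is precisely Lemma~\ref{cor_2}.

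The remaining properties (iii), (iv) and (v) I would obtain by repeating the contradiction scheme of Lemma~\ref{cor_2}. In each case, negating the desired inequality yields a clique index $j$ together with a directed path $a \rightarrow c_j \rightarrow b$ (or $b \rightarrow c_j \rightarrow a$), and transitivity of $D$ then forces an oriented edge between the two independent vertices $a, b \in I$, contradicting $\overline{ab} \notin E$. For example, for (iii) with $a \in I_1$, $N(a) = [1,m] \cup [n,k]$, and $b \in I_2$, $N(b) = [1,r]$: if $r \ge n$ then $\overrightarrow{ac_n} \in D$ and $\overrightarrow{c_n b} \in D$, so $\overrightarrow{ab} \in D$, which is impossible; properties (iv) and (v) follow from the symmetric variants using $\overrightarrow{c_m a}$ paired with $\overrightarrow{b c_m}$.

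For the converse, assume the labelling satisfies (i)--(v). I would define an orientation $D$ by taking the linear order on the clique, $\overrightarrow{c_i c_j} \in D$ whenever $i < j$, and orienting the edges incident to each $a \in I$ according to its type: for $a \in I_1$ with $N(a) = [1,m] \cup [n,k]$ set $\overrightarrow{c_i a} \in D$ for $i \le m$ and $\overrightarrow{a c_i} \in D$ for $i \ge n$; make every vertex of $I_2$ a sink and every vertex of $I_3$ a source on its neighbourhood. Since $I$ is independent, this orients every edge of $G$, so $D$ is well defined, and once transitivity is checked, acyclicity follows automatically since a directed cycle would, by iterated transitivity, yield a loop. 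To verify transitivity I would analyse a length-two directed path $\overrightarrow{xy}, \overrightarrow{yz} \in D$ according to the location of the middle vertex $y$. If $y \in I$, then $x, z \in C$ because $I$ is independent; the only $I$-vertices that are neither pure sinks nor pure sources lie in $I_1$, and for such $y$ the relations force $x = c_i$ with $i \le m$ and $z = c_j$ with $j \ge n$, whence $i \le m < n \le j$ and $\overrightarrow{xz} \in D$. If $y \in C$ and at most one of $x, z$ lies in $I$, transitivity follows directly from the order on $C$ together with the interval shape of the relevant neighbourhood.

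The decisive case, which I expect to be the main obstacle, is $y = c_j \in C$ with both $x, z \in I$. Here $x \rightarrow c_j$ forces $x \in I_1 \cup I_3$ and $c_j \rightarrow z$ forces $z \in I_1 \cup I_2$, and the existence of such an index $j$ would create the forbidden edge between the independent vertices $x$ and $z$. The crux is to check that properties (ii)--(v) make the admissible range of $j$ empty in each of the four type-combinations: property (v) rules out $(x,z) \in I_1 \times I_1$ via $m_z < n_x$, property (iii) rules out $I_1 \times I_2$ via $r_z < n_x$, property (iv) rules out $I_3 \times I_1$ via $m_z < l_x$, and property (ii) rules out $I_3 \times I_2$ via $r_z < l_x$. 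Since no such $j$ exists in any case, no directed path joins two vertices of $I$, and assembling all the cases establishes transitivity of $D$, completing the converse.
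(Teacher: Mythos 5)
Your proposal is correct, and its skeleton matches the paper's: the forward direction uses the longest-directed-path labelling together with Lemma~\ref{neighbors} (for property (i)) and Lemma~\ref{cor_2} (for property (ii)), and the converse constructs exactly the same orientation $D$ (clique edges ordered by labels, $I_1$-vertices with in-edges from $[1,m]$ and out-edges to $[n,k]$, $I_2$-vertices sinks, $I_3$-vertices sources). The genuine difference lies in how the two verifications are discharged. For properties (iii)--(v) the paper simply cites the semi-transitive characterization (Theorem~\ref{Word_split_graph}, points (ii) and (iii)), using the fact that a transitive orientation is semi-transitive; you instead reprove them directly by the same two-step-path contradiction used in Lemma~\ref{cor_2} (e.g., $r \ge n$ yields $a \to c_n \to b$ and hence a forbidden edge inside $I$), which is valid and keeps the argument self-contained. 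In the converse, the paper argues by contradiction (assume $\overrightarrow{ab}, \overrightarrow{bc} \in D$ but $\overrightarrow{ac} \notin D$), organizes the cases by which of $A_1, A_2, A_3$ the endpoints lie in, and handles the case of both endpoints in $C$ by invoking Theorem~\ref{Word_split_graph} once more to conclude that $D$ is semi-transitive and hence acyclic. Your organization by the middle vertex of the path $x \to y \to z$ avoids that appeal entirely: when $y \in I$ it must lie in $I_1$, forcing clique endpoints with $i \le m < n \le j$; when $y \in C$ and at most one endpoint is in $I$, the upward (respectively downward) closedness of out-neighbourhoods $[n,k]$, $[l,k]$ (respectively in-neighbourhoods $[1,m]$, $[1,r]$) settles it; and when both endpoints lie in $I$, your four applications of (ii)--(v) show the configuration cannot occur, which is precisely the core of the paper's contradiction argument in different packaging. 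What your route buys is complete independence from the cited Theorem~\ref{Word_split_graph}, at the cost of rechecking facts the paper obtains by citation; both constitute complete proofs.
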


\begin{proof}
	Let $D$ be a transitive orientation of $G$ with the longest directed path $c_1 \rightarrow c_2 \rightarrow \cdots \rightarrow c_k$ in $C$. For each $1 \le i \le k$, we label the vertex $c_i$ of $C$ as $i$. Then, in view of lemmas \ref{neighbors} and \ref{cor_2}, points \ref{pt_1} and \ref{pt_2} hold. Further, since a transitive orientation is also a semi-transitive orientation, points \ref{pt_3}, \ref{pt_4}, and \ref{pt_5} follow from Theorem \ref{Word_split_graph}\ref{point_2}\&\ref{point_3}.
	
	Conversely, suppose the vertices of $C$ are labeled from $1$ to $k$ such that it satisfies the properties (i) -- (v). We now define the following sets: 
	
	$A_1 = \{a \in I \mid N(a) = [1, m] \cup [n, k], \ \text{for some} \ m < n\}$, 
	
	$A_2 = \{a \in I \mid N(a) = [1, r], \ \text{for some} \ r < k\}$, and 
	
	$A_3 = \{a \in I \mid N(a) = [l, k], \ \text{for some} \ l > 1\}$. \\ Further, we define an orientation $D$ of $G$ as per the following:
	\begin{itemize}
		\item $\overrightarrow{ij} \in D$, for all $1 \le i < j \le k$.
		
		\item If $a \in A_1$ with $N(a) = [1, m] \cup [n, k]$, for some $m < n$, then $\overrightarrow{ia} \in D$, for all $1 \le i \le m$, and $\overrightarrow{aj} \in D$, for all $n \le j \le k$.
		
		\item If $a \in A_2$, then make it a sink in $D$.
		
		\item If $a \in A_3$, then make it a source in $D$.	
	\end{itemize}
	We show that $D$ is a transitive orientation. On the contrary, suppose there exist vertices $a, b, c$ of $G$ such that $\overrightarrow{ab} \in D$, $\overrightarrow{bc} \in D$, but $\overrightarrow{ac} \notin D$. Observe that at least one of $a$ and $c$ belong to $I$. If both $a, c \in C$, we must have $\overrightarrow{ca} \in D$, which implies that $a, b, c$ form a directed cycle. However, in view of Theorem \ref{Word_split_graph}, $D$ is indeed a semi-transitive orientation and thus, $D$ is acyclic. Hence, $I$ has at least one of $a$ and $c$. Accordingly, we have $b \in C$ so that $b = s$, for some $1 \le s \le k$.
	
	In the following, we will show that none of $a$ and $c$ belong to $I$, leading to a contradiction to our assumption. 
	
	\paragraph{Claim: $a, c \not\in A_3$.}  Since the vertices of $A_3$ are sources in $D$, note that $c \notin A_3$. Suppose $a \in A_3$ and $N(a) = [l, k]$, for some $l < k$. Note that $l \le s$, as $s \in N(a)$. We first show that $c \not\in C \cup A_2$ as well. 
	
	If $c \in C$, then $c = t$, for some $t > s$ (as $\overrightarrow{sc} = \overrightarrow{bc}   \in D$). But as $a \in A_3$ and $\overrightarrow{as} \in D$, by the property of $A_3$, we have $\overrightarrow{at} = \overrightarrow{ac} \in D$, but $\overrightarrow{ac} \not\in D$. If $c \in A_2$, then $N(c) = [1, r]$, for some $r < k$. Since $N(a) = [l, k]$, by point \ref{pt_2}, we have $r < l$. But, since $l \le s$, we have $r < s$; which is not possible as $s \in [1, r]$. 
	
	Thus, $c \in A_1$. Accordingly, let $N(c) = [1, m] \cup [n, k]$, for some $m < n$. Since $N(a) = [l, k]$, by property \ref{pt_4}, we have $m < l$. Now, as $\overrightarrow{sc} \in D$, we must have $s \le m$ (by property of $A_1$). But, as $m < l$, we get $s < l$, a contradiction to $l \le s$. Thus, none of $a$ and $c$ belong to $A_3$.
	
	\paragraph{Claim: $a, c \not\in A_2$.} Since each vertex of $A_2$ is a sink, we have $a \notin A_2$. Suppose $c \in A_2$ and $N(c) = [1, r]$, for some $r < k$. As $s \in N(c)$, note that $s \le r$. 
	
	If $a \in C$, then $a = t$, for some $t < s$, as $\overrightarrow{as} \in D$. But, since $c \in A_2$ and $\overrightarrow{sc} \in D$, by property of $A_2$, we have $\overrightarrow{tc} \ (= \overrightarrow{ac}) \in D$; which is not possible. Hence, $a \in I$. However, since $a \notin A_2 \cup A_3$ (including the conclusion of previous case), we must have $a \in A_1$. 
	
	Let $N(a) = [1, m] \cup [n, k]$, for some $m < n$. Since $N(c) = [1, r]$, by point \ref{pt_3}, we have $r < n$. As $\overrightarrow{as} \in D$, by property of $A_1$, we have $n \le s$. But, as $r < n$, we have $r < s$, a contradiction to $s \le r$. Thus, none of $a$ and $c$ belong to $A_2$.
	
	\paragraph{Claim: $a, c \not\in A_1$.} Since $a, c \not\in A_2 \cup A_3$, we consider only the following cases.
	\begin{itemize}
		\item Both $a$ and $c$ belong to $A_1$: Let $N(a) = [1, m] \cup [n, k]$, for some $m < n$, and $N(c) = [1, m'] \cup [n', k]$, for some $m' < n'$. Then, by property \ref{pt_5}, we have $m < n'$ and $m' < n$. Since $\overrightarrow{as} \in D$ and $\overrightarrow{sc} \in D$, by property of $A_1$, we must have $ n \le s \le m'$ so that $n \le m'$, a contradiction to $m' < n$.
		
		\item $a \in A_1$ and $c \in C$: Let $N(a) = [1, m] \cup [n, k]$, for some $m < n$. As $\overrightarrow{sc} \in D$, we have $c = t$, for some $t > s$.	Moreover, as $\overrightarrow{as} \in D$, we have $n \le s < t \le k$ so that $\overrightarrow{at} = \overrightarrow{ac} \in D$, a contradiction.
		
		\item $c \in A_1$ and $a \in C$: Let $N(c) = [1, m] \cup [n, k]$, for some $m < n$. As $\overrightarrow{as} \in D$, we have $a = t$ with $t < s$. Further, as $\overrightarrow{sc} \in D$, we have $1 \le t < s \le m$ so that we have $\overrightarrow{tc}  =\overrightarrow{ac} \in D$, a contradiction.
	\end{itemize}
	Hence, for any three vertices $a, b, c$ of $G$, if $\overrightarrow{ab} \in D$ and $\overrightarrow{bc} \in D$, then $\overrightarrow{ac} \in D$ so that $D$ is a transitive orientation. \qed
\end{proof}

\section{Permutation-Representation Number}

Let $G = (I \cup C, E)$ be a split comparability graph. Then, by Theorem \ref{coro_3}, the vertices of $C$ can be labeled from $1$ to $k = |C|$ such that it satisfies all the properties given in Theorem \ref{coro_3}. Throughout this section, we consider the aforementioned labelling of the vertices of $C$. We now consider the following sets:
\begin{align*}
	A_1 & = \{a \in I \mid N(a) = [1, m] \cup [n, k], \ \text{for some} \ m < n\} \\
	A_2 & = \{a \in I \mid N(a) = [1, r], \ \text{for some} \ r < k\} \\
	A_3 & = \{a \in I \mid N(a) = [l, k], \ \text{for some} \ l > 1\}
\end{align*}
Then, from Theorem \ref{coro_3}, we have $A_1, A_2$ and $A_3$ are mutually disjoint, and $I = A_1 \cup A_2 \cup A_3$. For $a \in A_1$, let $N(a) = [1, m_a] \cup [n_a, k]$, for $a \in A_2$, let $N(a) = [1, r_a]$, and for $a \in A_3$, let $N(a) = [l_a, k]$. By Theorem \ref{coro_3}\ref{pt_4}, since $m_a < l_b$ for any $a \in A_1$ and $b \in A_3$, we have the following remark.

\begin{remark}\label{A_1A_3}
	Note that d = $\max\{m_a \mid a \in A_1\}$, and $d < \min\{l_a \mid a \in A_3\}$.
\end{remark}

Given a split graph $G = (I \cup C, E)$ with a transitive orientation (and hence, a labelling of the vertices of $C$), Algorithm \ref{Algo_2} constructs three permutations on $I \cup C$ whose concatenation represents $G$. In what follows, the word $z$ refers to the output of Algorithm \ref{Algo_2}.

\begin{algorithm}[tb]\label{Algo_2}
	\caption{Constructing a $3$-uniform word permutationally representing a split comparability graph.}
	\label{algo-2}
	\KwIn{A split comparability graph $G = (I \cup C, E)$ with $1, 2, \ldots, k$ as  labels of the vertices of $C$.}
	\KwOut{A $3$-uniform word $z$ permutationally representing $G$.}
	
	Initialize $q_1$, $q_2$ and $q_3$ with the permutation $12 \cdots k$ and update them with the vertices of $I$, as per the following.\\
	Initialize $d = 1$.\\
	\For{$a \in I$}{\If{$a \in A_1$}{ Let $N(a) = [1, m] \cup [n, k]$.\\
			\If{$m > d$}{Assign $d = m$.}
			
			Replace $m$ in $q_1$ with $ma$.\\
			Replace $n$ in $q_2$ with $an$.}
		\ElseIf{$a \in A_2$}{ Let $N(a) = [1, r]$.\\
				
				Replace $r$ in $q_2$ with $ra$.\\
				Update $q_3$ with $q_3a$.}	
		\Else{Let $N(a) = [l, k]$.\\
			Replace $l$ in $q_2$ with $al$.\\
			Replace $l$ in $q_3$ with $al$.}
	}
	Update $q_1$ with $(q_2|_{A_3})^Rq_1 (q_2|_{A_2})^R$.\\
	Replace $d$ in $q_3$ with $d (q_1|_{A_1})^R$.\\
	\Return{{\rm the word} $q_1q_2q_3$}	
\end{algorithm}

\begin{remark}\label{position_C_Alg2}
	In Algorithm \ref{Algo_2}, note that the order of the vertices of $C$, i.e., $12 \cdots k$, in the permutations $q_1, q_2$, and $q_3$ do not alter when they are updated. Hence, we have  $z|_{C}  = 12 \cdots k12 \cdots k12 \cdots k$.
\end{remark}

\begin{remark}\label{remark_8}
	Note that, the set $A \subseteq I$ defined in Section 3 of \cite{tithi_split} and the set $A_1$ are identical for a split comparability graph $G$. Moreover, the lines related to insertion of the elements of $A$ in $p_1$, $p_2$ and $p_3$ in Algorithm 1 of \cite{tithi_split} are exactly the same as the lines on insertion of the elements of $A_1$ in $q_1$, $q_2$ and $q_3$ in Algorithm \ref{Algo_2}. Hence, the positions of the vertices of $A_1$ in $q_1, q_2$, and $q_3$ with respect to the vertices of $C$ are same as the positions of the vertices of $A$ in $p_1, p_2, p_3$ with respect to the vertices of $C$, respectively, i.e., $p_1|_{A \cup C} = q_1|_{A_1 \cup C}$, $p_2|_{A \cup C} = q_2|_{A_1 \cup C}$ and $p_3|_{A \cup C} = q_3|_{A_1 \cup C}$.
\end{remark}

\begin{lemma}\label{lemma_9}
	For $a, b \in V$, if $a$ and $b$ are adjacent in $G$, then $a$ and $b$ alternate in the word $z$.
\end{lemma}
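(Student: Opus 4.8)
The plan is to prove Lemma~\ref{lemma_9} by a careful case analysis on the types of the two adjacent vertices $a,b$, exploiting the very explicit structure of the output word $z = q_1 q_2 q_3$ produced by Algorithm~\ref{Algo_2}. Recall that $z$ is $3$-uniform, so for each vertex every occurrence lies in exactly one of the three blocks $q_1,q_2,q_3$. Since $a,b$ adjacent means we must show $z|_{\{a,b\}}$ is alternating, the key observation is that alternation of two letters each appearing three times is governed by the \emph{relative order} of their occurrences within and across the three blocks. So first I would fix notation recording, for each vertex, in which positions of $q_1,q_2,q_3$ its (single) occurrence sits, reading off directly from the algorithm: a vertex $i\in C$ occupies the same slot $i$ in all three permutations (Remark~\ref{position_C_Alg2}); an $a\in A_1$ is inserted just before slot $m_a$ in $q_1$, just after slot $n_a$ in $q_2$, and appears in the appended tail of $q_3$; an $a\in A_2$ sits just after $r_a$ in $q_2$ and in the appended suffix $(q_2|_{A_2})^R$ of $q_1$ and the tail $q_3 a$; an $a\in A_3$ sits before $l_a$ in both $q_2$ and $q_3$ and in the prepended prefix $(q_2|_{A_3})^R$ of $q_1$.

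Then I would split into the adjacency cases. The edges fall into: (1) clique edges $i,j\in C$; (2) an $A_1$-vertex with a clique vertex; (3) an $A_2$-vertex with a clique vertex; (4) an $A_3$-vertex with a clique vertex; and there are no $I$--$I$ edges since $I$ is independent. Case (1) is immediate from Remark~\ref{position_C_Alg2}, since $z|_C = (12\cdots k)^3$ and any two distinct clique labels alternate in that word. For cases (2)--(4) the strategy is uniform: write out the three occurrences of the $I$-vertex and the three occurrences of the adjacent clique label $s$, and check that they interleave $\cdots a\,s\,a\,s\,a\,s\cdots$ (or its reverse) using the defining inequalities of the neighborhood. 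For instance, for $a\in A_1$ adjacent to $s\in[1,m_a]$, in $q_1$ the letter $a$ precedes $m_a\ge s$ hence precedes $s$; in $q_2$ it follows $n_a>m_a\ge s$ hence follows $s$; and in $q_3$ the tail placement together with the $d$-insertion must be checked to complete the alternation, and symmetrically for $s\in[n_a,k]$. The arithmetic is routine once the slot positions are written down.

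The step I expect to be the genuine obstacle is the third block $q_3$, because $q_3$ is modified twice by the last two lines of the algorithm: the sink-vertices of $A_2$ are appended as a suffix, and then slot $d$ of $q_3$ is replaced by $d\,(q_1|_{A_1})^R$, which reinserts \emph{all} of $A_1$ in reversed order right after position $d=\max\{m_a\mid a\in A_1\}$. Keeping track of where each $A_1$-vertex lands relative to a given clique label $s$ in this reversed insertion is the delicate point, and here Remark~\ref{A_1A_3} ($d<l_b$ for all $b\in A_3$) and the inequalities $m_a<n_{a'}$, $m_{a'}<n_a$ from Theorem~\ref{coro_3}\ref{pt_5} are exactly what guarantee the correct interleaving. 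I would therefore single out the analysis of the $q_3$-occurrences as a separate sublemma, verifying that after both modifications the occurrence of any $A_1$-vertex $a$ still sits on the correct side of each of its neighbors $s\le m_a$ and $s\ge n_a$, and that for $A_2$- and $A_3$-vertices the suffix/prefix placements respect the sink/source requirement. Leaning on Remark~\ref{remark_8}, the $A_1$-versus-$A_1\cup C$ alternations can moreover be imported wholesale from the correctness of Algorithm~1 in \cite{tithi_split}, so only the interactions \emph{involving} $A_2$ and $A_3$ need to be checked from scratch.
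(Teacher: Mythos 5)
Your plan is, in its architecture, exactly the paper's own proof: handle clique--clique edges via Remark~\ref{position_C_Alg2}, import the $A_1$-versus-$C$ case from \cite{tithi_split} through Remark~\ref{remark_8}, and verify the $A_2$- and $A_3$-versus-$C$ cases directly from the occurrence positions in $q_1,q_2,q_3$. Your stated placements for $A_2$ (just after $r_a$ in $q_2$, in the suffix $(q_2|_{A_2})^R$ of $q_1$, at the tail of $q_3$) and for $A_3$ (just before $l_a$ in $q_2$ and $q_3$, in the prefix $(q_2|_{A_3})^R$ of $q_1$) are correct, and they do give the interleavings $sasasa$ and $asasas$ for every neighbor $s$, which is precisely the paper's subword argument.

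However, your reading of the algorithm for $A_1$ vertices is backwards, and the worked example built on it is invalid. The algorithm replaces $m$ in $q_1$ by $ma$, so $a$ sits just \emph{after} $m_a$ (not before), and replaces $n$ in $q_2$ by $an$, so $a$ sits just \emph{before} $n_a$ (not after); moreover, in $q_3$ the $A_1$ vertices are inserted immediately after slot $d$, not in an appended tail (the appended tail of $q_3$ holds $A_2$). Consequently your inference ``in $q_1$ the letter $a$ precedes $m_a\ge s$ hence precedes $s$'' fails twice over: the placement is wrong, and even granting it, preceding $m_a$ implies nothing about the order relative to a neighbor $s\le m_a$. With the correct placements the case goes through cleanly: for $s\le m_a$ one gets $sa$ in all three blocks (using $s\le m_a\le d$ for $q_3$), and for $s\ge n_a$ one gets $as$ in all three blocks (using $d<n_a$, which follows from Theorem~\ref{coro_3}\ref{pt_5}). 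This slip does not sink your proposal only because you ultimately defer the entire $A_1$ case to the imported correctness of Algorithm 1 of \cite{tithi_split}, exactly as the paper does; but had you executed your own slot analysis as written, you would have ``derived'' that $a$ and its neighbor $s$ fail to alternate, the opposite of what the lemma asserts.
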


\begin{proof}
	As $I$ is an independent set in $G$, note that at least one of $a$ and $b$ is in $C$. Suppose both $a, b \in C$. As the vertices of $C$ are labeled from $1$ to $k$, we have $a = i$ and $b = j$, for some $1 \le i, j \le k$. Thus, in view of Remark \ref{position_C_Alg2}, we have $z|_{\{a, b\}} = ababab$, if $i < j$, or $z|_{\{a, b\}} = bababa$, if $j < i$. Hence, in this case, $a$ alternates with $b$ in $z$. If $a \in I$ and $b \in C$, then we consider the following cases:
	\begin{itemize}
		\item Case 1: $a \in A_1$. In view of Remark \ref{remark_8} and Case 1 in the proof of Lemma 2 in \cite{tithi_split}, we conclude that $a$ and $b$ alternate in the word $z$.

		\item Case 2: $a \in A_2$. Note that $b \in N(a) = [1, r_a]$, for $r_a < k$. Thus, by line $1$ of Algorithm \ref{Algo_2}, we have $br_a \ll q_1$, $br_a \ll q_2$, and $br_a \ll q_3$. As $a \in A_2$, by replacing $r_a$ in $q_2$ with $r_aa$ (see line 14) and updating $q_3$ with $q_3a$ (see line 15), we have $br_aa \ll q_2$ and $br_aa \ll q_3$, respectively. Further, by line 23, we have $br_aa \ll q_1$. Thus, $ba \ll q_1$, $ba \ll q_2$, and $ba \ll q_3$ so that $a$ and $b$ alternate in the word $z$.
		
		\item Case 3: $a \in A_3$. Note that $b \in N(a) = [l_a, k]$, for $l_a > 1$. Thus, we have $l_ab \ll q_1$, $l_ab \ll q_2$, and $l_ab \ll q_3$. As $a \in A_3$, replacing $l_a$ in $q_2$ with $al_a$ (see line 19) and replacing $l_a$ in $q_3$ with $al_a$ (see line 20), we have $al_ab \ll q_2$ and $al_ab \ll q_3$, respectively. Further, by line 23, we have $al_ab \ll q_1$. Hence, $ab \ll q_1$, $ab \ll q_2$, and $ab \ll q_3$ so that $a$ and $b$ alternate in the word $z$.
	\end{itemize}
	\qed 
\end{proof}

\begin{lemma}\label{lemma_10}
	If $a, b \in I$, then $a$ and $b$ do not alternate in $z$.
\end{lemma}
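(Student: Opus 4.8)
The plan is to reduce the statement to a purely positional comparison. Since $z=q_1q_2q_3$ is $3$-uniform and, by the construction in Algorithm~\ref{Algo_2}, every vertex of $I$ occurs exactly once in each of $q_1,q_2,q_3$, the restriction $z|_{\{a,b\}}$ is the concatenation $\bigl(q_1|_{\{a,b\}}\bigr)\bigl(q_2|_{\{a,b\}}\bigr)\bigl(q_3|_{\{a,b\}}\bigr)$ of three two-letter words, each equal to $ab$ or $ba$. Such a length-six word is alternating (that is, $ababab$ or $bababa$) if and only if the three blocks coincide, i.e. if and only if $a$ and $b$ appear in the same relative order in $q_1$, $q_2$, and $q_3$. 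Hence it suffices to exhibit, for every pair $a,b\in I$, two of the permutations $q_1,q_2,q_3$ in which $a$ and $b$ occur in \emph{opposite} order. I would organise the argument according to the partition $I=A_1\cup A_2\cup A_3$, treating the three ``same-type'' pairs and the three ``mixed-type'' pairs separately.

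For same-type pairs I would exploit the reversals built into Algorithm~\ref{Algo_2}. If $a,b\in A_1$, the $A_1$-letters are inserted into $q_3$ as the block $(q_1|_{A_1})^R$, so their order in $q_3$ is exactly the reverse of their order in $q_1$; thus $a,b$ disagree between $q_1$ and $q_3$. If $a,b\in A_2$ (respectively $a,b\in A_3$), then $q_1$ is updated by appending $(q_2|_{A_2})^R$ (respectively prepending $(q_2|_{A_3})^R$), so the order of $a,b$ in $q_1$ is the reverse of their order in $q_2$; hence they disagree between $q_1$ and $q_2$. A useful feature of these reversal arguments is that they are insensitive to the order in which the for-loop processes the vertices, so no case analysis on processing order is needed here.

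For mixed-type pairs I would read off the two disagreeing permutations from the labelling inequalities of Theorem~\ref{coro_3} and Remark~\ref{A_1A_3}. For $a\in A_1$, $b\in A_2$: in $q_1$ the letter $a$ sits among the labels (just after $m_a<k$) while $b$ sits in the tail block after label $k$, so $a$ precedes $b$; in $q_2$, Theorem~\ref{coro_3}\ref{pt_3} gives $r_b<n_a$, which forces $b$ (just after label $r_b$) before $a$ (just before label $n_a$); thus $q_1$ and $q_2$ disagree. For $a\in A_1$, $b\in A_3$: in $q_1$ the letter $b$ lies in the head block before label $1$ while $a$ lies after label $m_a\ge 1$, so $b$ precedes $a$; in $q_3$, Remark~\ref{A_1A_3} gives $d<l_b$, so the $A_1$-block (inserted immediately after label $d$) precedes $b$ (inserted just before label $l_b$), whence $a$ precedes $b$; again $q_1$ and $q_3$ disagree. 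For $a\in A_2$, $b\in A_3$: in both $q_1$ and $q_3$ the letter $b$ (head block, respectively just before label $l_b\le k$) precedes $a$ (tail block, respectively just after label $k$), while in $q_2$ Theorem~\ref{coro_3}\ref{pt_2} gives $r_a<l_b$, forcing $a$ (just after $r_a$) before $b$ (just before $l_b$); so $q_2$ disagrees with $q_1$ and $q_3$.

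The step I expect to require the most care is justifying the ``just before/just after a label'' comparisons when several vertices are inserted into the same gap between two consecutive labels (for instance when $r_b=n_a-1$ in the $A_1$--$A_2$ case, or at tied values of $m,n,r,l$). The bookkeeping fact I would isolate first is that an element inserted immediately after label $t$ always ends up to the left of an element inserted immediately before label $t+1$, independently of the processing order, because the operation ``replace $t$ by $t\,x$'' keeps $x$ adjacent to label $t$ on its right while ``replace $t{+}1$ by $y\,(t{+}1)$'' keeps $y$ adjacent to label $t+1$ on its left. Once this observation is recorded, each mixed case collapses to a single inequality coming from Theorem~\ref{coro_3} or Remark~\ref{A_1A_3}, and the lemma follows.
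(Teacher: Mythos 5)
Your proof is correct, and its core is the same as the paper's: the same six-case decomposition according to $I=A_1\cup A_2\cup A_3$, with the mixed cases resolved by exactly the same inequalities (Theorem~\ref{coro_3}\ref{pt_2}, \ref{pt_3} and Remark~\ref{A_1A_3}) and the same-type $A_2$ and $A_3$ cases resolved by the reversal blocks of line 23. The one genuine difference is the case $a,b\in A_1$: the paper disposes of it by citing Remark~\ref{remark_8} together with the corresponding case in the proof of Lemma 3 of \cite{tithi_split}, whereas you handle it directly by observing that all $A_1$-letters enter $q_3$ only through the block $(q_1|_{A_1})^R$ of line 24, so any two of them occur in opposite orders in $q_1$ and $q_3$. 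This makes your proof self-contained where the paper's is not, and it is a clean, valid argument since $A_1$-letters are inserted into $q_1$ only during the loop and line 23 does not disturb their relative order. Two further points in your write-up that the paper leaves implicit are worth keeping: the reduction of non-alternation to ``two of the three permutations order $a,b$ oppositely'' (legitimate because each vertex occurs exactly once in each $q_i$), and the bookkeeping fact that, within a single gap between consecutive labels, letters inserted by ``replace $t$ with $t\,x$'' always precede letters inserted by ``replace $t{+}1$ with $y\,(t{+}1)$'' regardless of processing order; the paper's subword claims such as $r_bban_a \ll q_2$ silently rely on this.
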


\begin{proof}
	Since $I = A_1 \cup A_2 \cup A_3$, we consider the following cases.
	\begin{itemize}
		\item Case 1: $a, b \in A_1$. In view of Remark \ref{remark_8} and Case 1 in the proof of Lemma 3 in \cite{tithi_split}, we conclude that $a$ and $b$ do not alternate in the word $z$.
		
		\item Case 2: $a, b \in A_2$. Without loss of generality, suppose $ab \ll q_2$. Then, by line 23 of Algorithm \ref{algo-2}, we have $ba \ll q_1$. Thus, $a$ and $b$ do not alternate in the word $z$.
		
		\item Case 3: $a, b \in A_3$. Without loss of generality, suppose $ab \ll q_2$. Then, $ba \ll q_1$ (by line 23). Thus, $a$ and $b$ do not alternate in the word $z$.
		
		\item Case 4: $a \in A_1$ and $b \in A_2$. By line 23 of Algorithm \ref{algo-2}, we have $ab \ll q_1$. Note that $N(a) = [1, m_a] \cup [n_a, k]$, for $m_a < n_a$, and $N(b) = [1, r_b]$, for $r_b < k$. Then, by Theorem \ref{coro_3} \ref{pt_3}, we have $r_b < n_a$ so that $r_bn_a \ll q_2$. Thus, by replacing $r_b$ in $q_2$ with $r_bb$ (see line 14) and replacing $n_a$ in $q_2$ with $an_a$ (see line 10), we have $r_bban_a \ll q_2$ so that $ba \ll q_2$. Hence, $a$ and $b$ do not alternate in the word $z$.
		
		\item Case 5: $a \in A_1$ and $b \in A_3$. Then, by line 23 of Algorithm \ref{algo-2}, we have $ba \ll q_1$. Note that $N(a) = [1, m_a] \cup [n_a, k]$, for $m_a < n_a$, and $N(b) = [l_b, k]$, for $l_b > 1$. Then, in view of Remark \ref{A_1A_3}, we have $m_a \le d < l_b$ so that $m_adl_b \ll q_3$. Now, by replacing $d$ in $q_3$ with $da$ (see line 24) and by replacing $l_b$ in $q_3$ with $bl_b$ (see line 20), we have $ab \ll m_adabl_b \ll q_3$. Hence, $ba \ll q_1$ and $ab \ll q_3$ so that $a$ and $b$ do not alternate in the word $z$.
		
		\item Case 6: $a \in A_2$ and $b \in A_3$. Then, by line 23 of Algorithm \ref{algo-2}, we have $ba \ll q_1$. Note that $N(a) = [1, r_a]$, for $r_a < k$, and $N(b) = [l_b, k]$, for $l_b > 1$. Then, from Theorem \ref{coro_3} \ref{pt_2}, we have $r_a < l_b$ so that $r_al_b \ll q_2$. Now, by replacing $r_a$ in $q_2$ with $r_aa$ (see line 14) and by replacing $l_b$ in $q_2$ with $bl_b$ (see line 19), we have $ab \ll r_aabl_b \ll q_2$. Hence, $ba \ll q_1$ and $ab \ll q_2$ so that $a$ and $b$ do not alternate in the word $z$.
	\end{itemize}
Thus, in any case, $a$ and $b$ do not alternate in $z$. \qed	
\end{proof}

\begin{lemma}\label{lemma_11}
	For $a \in I$ and $b \in C$, if $a$ and $b$ are not adjacent in $G$, then $a$ and $b$ do not alternate in $z$.
\end{lemma}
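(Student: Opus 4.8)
The plan is to prove the contrapositive-free direct statement by a case analysis on which of the three types $A_1$, $A_2$, $A_3$ the vertex $a \in I$ belongs to. In each case $b \in C$ means $b = s$ for some label $1 \le s \le k$, and the hypothesis that $a$ and $b$ are non-adjacent translates into a concrete constraint on $s$ relative to the interval(s) defining $N(a)$. The goal in each case is to exhibit one permutation among $q_1, q_2, q_3$ in which the pair $\{a, s\}$ fails to alternate, i.e. to find a permutation where $z|_{\{a,s\}}$ begins the same way as another, producing a repeated pattern such as $as \cdots as$ with a third occurrence breaking alternation (concretely, $ssa$, $sas$ with the wrong parity, $aas$, etc.). Since $z = q_1 q_2 q_3$ and every letter appears once per permutation, the standard way to witness non-alternation is to show that two of the three permutations agree on the relative order of $a$ and $s$ (say both give $sa$, so $z|_{\{a,s\}}$ contains $sa \cdots sa$, and then the third occurrence must be checked, or more simply that the three relative orders are not strictly alternating).

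First I would handle Case $a \in A_1$ by appeal to Remark \ref{remark_8} and the corresponding lemma in \cite{tithi_split}, exactly as the adjacency lemma (Lemma \ref{lemma_9}) did, since the positions of $A_1$-vertices relative to $C$ are inherited unchanged from the earlier algorithm. For the remaining cases I would argue directly from the line numbers of Algorithm \ref{Algo_2}, mirroring the bookkeeping already used in Lemma \ref{lemma_9}. If $a \in A_2$ with $N(a) = [1, r_a]$, non-adjacency to $s$ forces $s > r_a$; since $a$ is inserted immediately after $r_a$ in $q_2$ (line 14) and appended at the end of $q_3$ (line 15), while the label $s$ sits to the right of $r_a$ in every block, I would track the resulting relative orders of $a$ and $s$ across $q_1, q_2, q_3$ and show they do not alternate. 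Symmetrically, if $a \in A_3$ with $N(a) = [l_a, k]$, non-adjacency forces $s < l_a$, and the insertions on lines 19, 20 together with the updates on lines 23 and 24 determine the relative positions; again I would read off the three relative orders and confirm failure of alternation.

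The main obstacle I anticipate is the careful treatment of the final re-arrangement steps, lines 23 and 24, which splice reversed sub-words $(q_2|_{A_3})^R$ and $(q_2|_{A_2})^R$ into $q_1$ and insert $(q_1|_{A_1})^R$ into $q_3$ at position $d$. These steps can move the $A_2$- and $A_3$-vertices relative to the clique labels, so the naive relative order established at insertion time need not survive; I must verify that for the specific label $s$ on the non-adjacent side the splicing does not accidentally restore alternation. In particular, for $a \in A_2$ the block $(q_2|_{A_2})^R$ is placed after $q_1$ (i.e. after all of $C$ in that block) on line 23, so $a$ lands to the right of $s$ in $q_1$ even though $s > r_a$; pinning down that this yields the same relative order as one of the other two blocks — and hence non-alternation — is the crux. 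Once the three relative orders of $\{a, s\}$ are determined in each case, concluding that $a$ and $b$ do not alternate in $z$ is immediate, completing the proof.
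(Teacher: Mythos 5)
Your overall architecture coincides with the paper's: split into the cases $a \in A_1$, $a \in A_2$, $a \in A_3$; dispose of the $A_1$ case via Remark \ref{remark_8} and the corresponding lemma of the earlier paper; and for $A_2$, $A_3$ track the relative order of $a$ and $b = s$ through lines 14, 15, 19, 20, 23, 24 of Algorithm \ref{Algo_2}. Your reading of those lines is also accurate: for $a \in A_2$ with $s > r_a$ the orders of the pair in $q_1, q_2, q_3$ come out as $sa$, $as$, $sa$, and for $a \in A_3$ with $s < l_a$ they come out as $as$, $sa$, $sa$.

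However, the inference rule you state for concluding non-alternation is inverted, and it is the crux of the whole lemma. Since each of $q_1, q_2, q_3$ contains every letter exactly once, $z|_{\{a,s\}}$ is a concatenation of three two-letter blocks, and $a$ and $s$ alternate in $z$ if and only if all three blocks induce the \emph{same} order. Hence non-alternation is witnessed by exhibiting two permutations that \emph{disagree} on the order of $a$ and $s$ --- not, as you write, by showing that ``two of the three permutations agree on the relative order'' or that $q_1$ ``yields the same relative order as one of the other two blocks --- and hence non-alternation.'' Under your stated criterion, the case $a \in A_2$ would be settled by the agreement of $q_1$ and $q_3$ (both $sa$); but that agreement proves nothing: if $q_2$ also gave $sa$, the projection would be $sasasa$ and the pair would alternate. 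The paper's proof uses exactly the opposite witness. For $a \in A_2$, line 23 places $(q_2|_{A_2})^R$ after $q_1$, giving $ba \ll q_1$, while line 14 together with $r_a < b$ gives $ab \ll q_2$; for $a \in A_3$, line 23 gives $ab \ll q_1$, while line 19 together with $b < l_a$ gives $ba \ll q_2$. In each case it is the disagreement between $q_1$ and $q_2$ that kills alternation, and $q_3$ need not be examined at all. Your computations would deliver the same conclusion, but only after replacing your agreement criterion by the correct disagreement criterion; as written, the logical step from the computed orders to ``does not alternate'' is unsound.
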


\begin{proof}
	As $I = A_1 \cup A_2 \cup A_3$, we consider the following cases.
	\begin{itemize}
		\item Case 1: $a \in A_1$. In view of Remark \ref{remark_8} and Case 1 in the proof of Lemma 4 in \cite{tithi_split}, we conclude that $a$ and $b$ do not alternate in the word $z$.
		
		\item Case 2: $a \in A_2$. From line 23 of Algorithm \ref{algo-2}, we have $ba \ll q_1$. Note that $N(a) = [1, r_a]$, for $r_a < k$. Since $b \notin N(a)$, we have $r_a < b \le k$ so that $r_ab \ll q_2$. Now, as per line 14 of Algorithm \ref{algo-2}, replacing $r_a$ in $q_2$ with $r_aa$, we have $r_aab \ll q_2$. Hence, $ba \ll q_1$ and $ab \ll q_2$ so that $a$ and $b$ do not alternate in the word $z$.
		
		\item Case 3: $a \in A_3$. From line 23 of Algorithm \ref{algo-2}, we have $ab \ll q_1$. Note that $N(a) = [l_a, k]$, for $l_a > 1$. Since $b \notin N(a)$, we have $1 \le b < l_a$ so that $bl_a \ll q_2$. Now, as per line 19 of Algorithm \ref{algo-2}, replacing $l_a$ in $q_2$ with $al_a$, we have $bal_a \ll q_2$. Thus, $ab \ll q_1$ and $ba \ll q_2$. Hence, $a$ and $b$ do not alternate in the word $z$.
	\end{itemize}
Hence, in any case, $a$ and $b$ do not alternate in the word $z$. \qed
\end{proof}

\begin{theorem}\label{3-per-rep}
	Let $G = (I \cup C, E)$ be a split comparability graph. Then, $\mathcal{R}^p(G) \le 3$. Moreover, $\mathcal{R}^p(G) = 3$ if and only if $G$ contains $B_4$ (given in Fig. \ref{fig7}) as an induced subgraph. 
\end{theorem}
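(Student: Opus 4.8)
The plan is to prove the theorem in two halves, corresponding to the inequality and the exact characterization. For the upper bound $\mathcal{R}^p(G) \le 3$, I would simply invoke Algorithm \ref{Algo_2} together with lemmas \ref{lemma_9}, \ref{lemma_10}, and \ref{lemma_11}. Those three lemmas establish that in the output word $z$, two vertices alternate precisely when they are adjacent in $G$: Lemma \ref{lemma_9} handles adjacent pairs (they alternate), while lemmas \ref{lemma_10} and \ref{lemma_11} handle the two kinds of non-adjacent pairs (both endpoints in $I$, or one in $I$ and one in $C$), showing they fail to alternate. Since every pair with both endpoints in $C$ is adjacent and covered by Lemma \ref{lemma_9}, these cases are exhaustive. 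As $z$ is a concatenation of three permutations on $I \cup C$ (by Remark \ref{position_C_Alg2} each letter occurs exactly three times, once per block), $z$ permutationally $3$-represents $G$, giving $\mathcal{R}^p(G) \le 3$ immediately.

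For the characterization, I would prove both implications of ``$\mathcal{R}^p(G) = 3$ if and only if $G$ contains $B_4$ as an induced subgraph.'' The cleaner direction is the contrapositive of necessity: if $G$ is $B_4$-free, then $\mathcal{R}^p(G) \le 2$. By the result quoted in the introduction \cite{Gallaipaper}, graphs with $\textit{prn}$ at most two are exactly the permutation graphs, and by Theorem \ref{Split_permu_graph} a split graph is a permutation graph iff it is $\mathcal{B}$-free, i.e., contains none of $B_1, B_2, B_3, B_4$. Since $G$ is already a split comparability graph, Theorem \ref{Split_comp_graph} guarantees $G$ contains none of $B_1, B_2, B_3$; so a $B_4$-free split comparability graph is automatically $\mathcal{B}$-free, hence a permutation graph, hence has $\textit{prn} \le 2$. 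Contrapositively, $\mathcal{R}^p(G) = 3$ forces the presence of an induced $B_4$. (Here I use that $\mathcal{R}^p(G) \le 3$ always, so $\mathcal{R}^p(G) \ne 3$ is equivalent to $\mathcal{R}^p(G) \le 2$, modulo the trivial edgeless case.)

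For the converse (sufficiency), I must show that if $G$ contains $B_4$ as an induced subgraph then $\mathcal{R}^p(G) = 3$, equivalently $\mathcal{R}^p(G) > 2$, equivalently $G$ is \emph{not} a permutation graph. This again follows from Theorem \ref{Split_permu_graph}: $B_4 \in \mathcal{B}$, so a split graph containing an induced $B_4$ is not $\mathcal{B}$-free and therefore not a permutation graph, whence $\mathcal{R}^p(G) \ge 3$. Combined with the upper bound, $\mathcal{R}^p(G) = 3$. I would need to confirm that $\mathcal{R}^p$ is monotone under induced subgraphs, or rather that the relevant forbidden-subgraph characterization of Theorem \ref{Split_permu_graph} directly applies to $G$ itself, which it does since $G$ is a split graph by hypothesis.

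The main obstacle I anticipate is not conceptual but bookkeeping at the boundary: I must ensure the three permutation blocks genuinely use $z|_C = 12\cdots k\,12\cdots k\,12\cdots k$ consistently (Remark \ref{position_C_Alg2}) and that every letter of $I$ appears exactly three times across $q_1, q_2, q_3$ after the updates in lines 22--24 of Algorithm \ref{Algo_2}. Verifying the latter requires tracking where each vertex of $A_1$, $A_2$, and $A_3$ gets inserted; for instance, an $A_2$ vertex is placed into $q_2$ and $q_3$ by the loop and then into $q_1$ by line 22, for a total of three occurrences, and similarly for the other types. Once this exact-multiplicity check is in place, the combination of the upper bound with Theorem \ref{Split_permu_graph} and Theorem \ref{Split_comp_graph} closes both directions of the equivalence without further difficulty.
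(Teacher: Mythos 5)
Your proposal is correct and follows essentially the same route as the paper: the upper bound via Algorithm \ref{Algo_2} and lemmas \ref{lemma_9}--\ref{lemma_11}, and the characterization via theorems \ref{Split_permu_graph} and \ref{Split_comp_graph}. The only cosmetic difference is that for sufficiency you apply Theorem \ref{Split_permu_graph} directly to $G$ to get $\mathcal{R}^p(G) \ge 3$, whereas the paper first computes $\mathcal{R}^p(B_4) = 3$ and then appeals to monotonicity of the \textit{prn} under induced subgraphs---your version is marginally more self-contained, but the substance is identical.
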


\begin{proof}
	Note that, if $a$ and $b$ are non-adjacent vertices of $G$, then either both $a, b \in I$ or one of them belongs to $I$. Thus, by lemmas \ref{lemma_9}, \ref{lemma_10}, and \ref{lemma_11}, the word $q_1q_2q_3$ permutationally represents $G$ so that $\mathcal{R}^p(G) \le 3$. 
	
	Note that $B_4$ is a split graph and it contains no induced subgraph isomorphic to $B_1, B_2$ or $B_3$ (refer Fig. \ref{fig7}). Thus, by Theorem \ref{Split_comp_graph}, $B_4$ is a comparability graph. Hence, $\mathcal{R}^p(B_4) \le 3$.  Also, from Theorem \ref{Split_permu_graph}, $B_4$ is not a permutation graph so that $\mathcal{R}^p(B_4) \ge 3$. Hence, $\mathcal{R}^p(B_4) = 3$. If $B_4$ is an induced subgraph of $G$, then clearly $\mathcal{R}^p(G) = 3$.
	
	Conversely, suppose $\mathcal{R}^p(G) = 3$ so that $G$ is not a permutation graph. Thus, by Theorem \ref{Split_permu_graph}, $G$ must contain one of the graphs from the family $\mathcal{B}$ (see Fig. \ref{fig7}) as an induced subgraph. In view of Theorem \ref{Split_comp_graph}, since $B_4$ is the only comparability graph in the family $\mathcal{B}$, the graph $G$ must contain $B_4$ as an induced subgraph.  \qed   
\end{proof}

In view of the connection between the \textit{prn} of a comparability graph and the dimension of its induced poset (cf. \cite[Corollary 2]{khyodeno2}), Theorem \ref{3-per-rep} gives us an alternative proof for \cite[Theorem 22]{split_orders2004}, as stated in the following corollary.

\begin{corollary}
	Let $G$ be a split comparability graph. The dimension of an induced poset of $G$ is at most three and the bound is tight. 
\end{corollary}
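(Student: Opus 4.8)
The plan is to combine Theorem~\ref{3-per-rep} with the order-theoretic translation recorded in \cite[Corollary 2]{khyodeno2}. The first step is to recall that a split comparability graph $G$ is, by definition, a comparability graph, and that for any comparability graph the permutation-representation number coincides with the dimension of every poset whose comparability graph is $G$ (its induced posets). Thus both halves of the statement---the upper bound and its tightness---will follow by transporting the corresponding facts about $\mathcal{R}^p(G)$ through this equality.

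For the upper bound I would invoke Theorem~\ref{3-per-rep} directly: it gives $\mathcal{R}^p(G) \le 3$ for every split comparability graph $G$. Via the equality $\mathcal{R}^p(G) = \dim P$ for any induced poset $P$ of $G$, this immediately yields that the dimension of every induced poset of $G$ is at most three.

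For tightness, the plan is to exhibit a single split comparability graph whose induced posets have dimension exactly three, and the natural candidate is $B_4$ from Fig.~\ref{fig7}. Theorem~\ref{3-per-rep} already establishes $\mathcal{R}^p(B_4) = 3$ (indeed $B_4$ is the witness graph in that theorem, being a split comparability graph that is not a permutation graph). Since $B_4$ is a split comparability graph, the same equality translates this into $\dim P = 3$ for any induced poset $P$ of $B_4$, showing the bound of three cannot be improved.

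I expect no serious obstacle: the corollary is essentially a dictionary translation of Theorem~\ref{3-per-rep}. The only point requiring care is the precise scope of the phrase \emph{induced poset} and ensuring that the cited equivalence in \cite{khyodeno2} applies to all such posets rather than to one chosen transitive orientation, so that both the universally quantified upper bound and the existence claim witnessing tightness are phrased correctly.
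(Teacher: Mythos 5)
Your proposal is correct and follows exactly the paper's own route: the paper likewise derives the corollary by combining Theorem~\ref{3-per-rep} with the equality $\mathcal{R}^p(G)=\dim P$ for any induced poset $P$ of a comparability graph $G$ (cf.\ \cite[Corollary 2]{khyodeno2}), with $B_4$ serving as the tightness witness since $\mathcal{R}^p(B_4)=3$. No gaps; your cautionary remark about the quantification over all induced posets is precisely why the cited equivalence suffices for both directions.
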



\begin{thebibliography}{10}
	
	\bibitem{Split_circle_graphs}
	F.~Bonomo-Braberman, G.~Dur\'{a}n, N.~Pardal, and M.~D. Safe.
	\newblock Forbidden induced subgraph characterization of circle graphs within
	split graphs.
	\newblock {\em Discrete Appl. Math.}, 323:43--75, 2022.
	
	\bibitem{tithi_split}
	T.~Dwary, K.~Mozhui, and K.~V. Krishna.
	\newblock Representation number of word-representable split graphs.
	\newblock {\em arXiv:2502.00872v2}, 2025.
	
	\bibitem{Gallaipaper}
	T.~Gallai.
	\newblock Transitiv orientierbare {G}raphen.
	\newblock {\em Acta Math. Acad. Sci. Hungar.}, 18:25--66, 1967.
	
	\bibitem{Golumbicbook_2004}
	M.~C. Golumbic.
	\newblock {\em Algorithmic graph theory and perfect graphs}, volume~57 of {\em
		Annals of Discrete Mathematics}.
	\newblock Elsevier Science B.V., Amsterdam, second edition, 2004.
	
	\bibitem{split_orders2004}
	G.-B. Guenver and J.-X. Rampon.
	\newblock Split orders.
	\newblock {\em Discrete Math.}, 276(1-3):249--267, 2004.
	
	\bibitem{Hallsorsson_2011}
	M.~M. Halld\'{o}rsson, S.~Kitaev, and A.~Pyatkin.
	\newblock Alternation graphs.
	\newblock In {\em Graph-theoretic concepts in computer science}, volume 6986 of
	{\em Lecture Notes in Comput. Sci.}, pages 191--202. Springer, Heidelberg,
	2011.
	
	\bibitem{Halldorsson_2016}
	M.~M. Halld\'{o}rsson, S.~Kitaev, and A.~Pyatkin.
	\newblock Semi-transitive orientations and word-representable graphs.
	\newblock {\em Discrete Appl. Math.}, 201:164--171, 2016.
	
	\bibitem{Kitaev_2021}
	S.~Kitaev, Y.~Long, J.~Ma, and H.~Wu.
	\newblock Word-representability of split graphs.
	\newblock {\em J. Comb.}, 12(4):725--746, 2021.
	
	\bibitem{words&graphs}
	S.~Kitaev and V.~Lozin.
	\newblock {\em Words and graphs}.
	\newblock Monographs in Theoretical Computer Science. An EATCS Series.
	Springer, Cham, 2015.
	
	\bibitem{MR2467435}
	S.~Kitaev and A.~Pyatkin.
	\newblock On representable graphs.
	\newblock {\em J. Autom. Lang. Comb.}, 13(1):45--54, 2008.
	
	\bibitem{Kitaev_2024}
	S.~Kitaev and A.~Pyatkin.
	\newblock On semi-transitive orientability of split graphs.
	\newblock {\em Inform. Process. Lett.}, 184:Paper No. 106435, 4, 2024.
	
	\bibitem{perkinsemigroup}
	S.~Kitaev and S.~Seif.
	\newblock Word problem of the {P}erkins semigroup via directed acyclic graphs.
	\newblock {\em Order}, 25(3):177--194, 2008.
	
	\bibitem{khyodeno2}
	K.~Mozhui and K.~V. Krishna.
	\newblock On the permutation-representation number of bipartite graphs using
	neighborhood graphs.
	\newblock {\em arXiv:2311.13980}, 2023.
	
	\bibitem{yannakakis1982complexity}
	M.~Yannakakis.
	\newblock The complexity of the partial order dimension problem.
	\newblock {\em SIAM J. Algebraic Discrete Methods}, 3(3):351--358, 1982.
	
\end{thebibliography}
\end{document}